\documentclass{amsart}

\usepackage[T1]{fontenc}
\usepackage[utf8]{inputenc}
\usepackage{amsmath,amsthm,amsfonts,amscd,amssymb,eucal,latexsym,mathrsfs}
\usepackage{stmaryrd}
\usepackage{enumerate}
\usepackage{hyperref}
\usepackage[all]{xy}
\usepackage{etoolbox}
\usepackage{amssymb}
\usepackage{amsfonts}
\usepackage{graphicx}
\usepackage{mathtools}

\usepackage{tikz,tikz-cd}
\usetikzlibrary{shapes.geometric}
\usetikzlibrary{arrows}
\usetikzlibrary{calc}

\usetikzlibrary{positioning}
\usepackage{float}
\usepackage{MnSymbol}
\usetikzlibrary{matrix}
\usepackage{tkz-euclide}

\theoremstyle{plain}
\newtheorem{theorem}{Theorem}[section]
\newtheorem*{theorem*}{Theorem}

\newtheorem{corollary}[theorem]{Corollary}
\newtheorem*{corollary*}{Corollary}

\newtheorem{lemma}[theorem]{Lemma}

\theoremstyle{definition}

\newtheorem*{definition*}{Definition}

\usetikzlibrary{calc,graphs}
\usepackage{xcolor}
\usetikzlibrary{arrows,decorations.pathmorphing}

\newcommand{\e}{\varepsilon}

\newcommand{\IF}{\mathbb{F}}

\newcommand{\ZI}{\mathbb{Z}}

\DeclareMathOperator{\surj}{\twoheadrightarrow}

\newcommand*\mat[1]{\protect\begin{pmatrix}#1\protect\end{pmatrix}}

\newcommand{\SL}{\mathrm{SL}}

\newcommand{\ts}{\textsection}

\newcommand{\ip}[1]{\langle#1\rangle} 

\title{On the kissing number of hyperbolic surfaces with a small systole}
\author{Sylvain Barré, Mika\"el Pichot}

\newcommand{\Kiss}{\mathrm{Kiss}}
\newcommand{\PGL}{\mathrm{PGL}}

\begin{document}

\maketitle

\begin{abstract}
Parlier proved that there exists a constant $U>0$ such that for any hyperbolic surface $S$ of genus $g$ with systole $\ell$ the following holds:
\[
\Kiss(S)\leq U \frac {e^\ell}\ell g
\]
where $\Kiss(S)$ is the maximum number of homotopically distinct systoles in $S$ (Kissing number). 

We  show that there exists a constant $V>0$ and a sequence $S_n$ of hyperbolic homogeneous surfaces of genus $g_n\to \infty$ with a bounded systole such that 
\[
\Kiss(S_n)\geq Vg_n
\] 
for all $n\geq 1$.  This shows that Parlier's theorem is sharp for a fixed systole length.

Our proof uses Bruhat--Tits buildings of type $\tilde A_2$ and Hamiltonian surfaces.
\end{abstract}

\section{Introduction}

In his paper \cite{P}, Hugo Parlier proves that the number of homotopically distinct systoles (kissing number) in a closed hyperbolic surface $S$ of genus $g$ and systoles $\ell$ is bounded above by
\[
\Kiss(S)\leq U \frac {e^\ell}\ell g
\]
where $U>0$ is a constant. This implies that the number of systoles in any hyperbolic surface of genus $g$ is bounded above by $\frac {g^2}{\log g}$.

The search for lower bounds has a long history (see \cite{P}).  Paul Schmutz Schaller \cite{sc,sc2} has shown that there exists a familly of arithmetic surfaces with kissing number bounded below by
\[
\Kiss(S)\geq g^{4/3-\e}\ \ \forall \e>0.
\] 
This is the best known construction to date and therefore there is a gap between Schmutz Schaller's lower bound $g^{4/3-\e}$ and Parlier's upper bound $\frac {g^2}{\log g}$. In \cite{sc2}, Schmutz Schaller conjectures that the bound $g^{4/3}$ is optimal. 

Our main theorem shows that Parlier upper bound is sharp in the case of a surface with a bounded systole.  

\begin{theorem}\label{T - th1} 
There exists a constant $V>0$ and a sequence $S_n$ of hyperbolic homogeneous surfaces of genus $g_n\to \infty$ with a bounded systole such that 
\[
\Kiss(S_n) \geq V g_n
\] 
for all $n\geq 1$.\end{theorem}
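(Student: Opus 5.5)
The plan is to build the surfaces $S_n$ as finite covers of one fixed compact hyperbolic orbifold (or surface) $S_0$. Fixing $S_0$ keeps both the local geometry and the systole under control. The covers are chosen as quotients of $\mathbb{H}^2$ by finite-index subgroups $\Gamma_n$ of a fixed cocompact Fuchsian group $\Gamma$. Concretely, I would take $\Gamma$ to be a triangle-type group that acts simply transitively on the edges of a regular tessellation. The finite quotients $\Gamma/\Gamma_n$ are to be extracted from a group acting on a Bruhat--Tits building of type $\tilde A_2$, for instance a lattice in $\PGL_3(\IF_q((t)))$ with congruence quotients $\PGL_3(\IF_q[t]/(f_n))$.

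The geometric picture is as follows. Apartments in the building are Euclidean planes tessellated by triangles. A Hamiltonian surface, i.e.\ a closed surface made of triangles embedded in the $2$-skeleton of the finite quotient complex, gives a tiling of a surface by triangles. Replacing each Euclidean triangle by a hyperbolic triangle with suitable small angles (e.g.\ $\pi/7$ in place of $\pi/3$) makes this a hyperbolic surface, homogeneous because the triangle group acts transitively on the faces.

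The key steps, in order, are these.

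\begin{enumerate}[(1)]
\item Construct a sequence of finite $\tilde A_2$ quotients $X_n=\Lambda_n\backslash \Delta$ whose number of faces tends to infinity. Each $X_n$ should contain a Hamiltonian surface $\Sigma_n$, a closed surface subcomplex passing through every vertex, with degree/link conditions that allow a uniform hyperbolic structure.
\item Give $\Sigma_n$ its hyperbolic structure $S_n$ by gluing copies of one fixed hyperbolic triangle. Then $S_n$ is a cover of a fixed orbifold $S_0$, so the genus satisfies $g_n\asymp \#\text{faces}(\Sigma_n)$ by Euler characteristic.
\item Show that the systole of $S_n$ is realized by a fixed short combinatorial closed curve $c$, for instance the geodesic around a small configuration of triangles. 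Such a curve lifts to a closed curve in every $S_n$ because it comes from a relation coming from the local structure of the building (a small cycle in a link, or torsion-free quotients of an apartment). Its length $\ell_0$ is independent of $n$, which gives the bounded systole.
\item Use homogeneity. The automorphism group of $S_n$ acts transitively on faces, so translates of $c$ give at least $\#\text{faces}/C \geq V g_n$ homotopically distinct closed geodesics of length $\ell_0$. Distinctness follows because distinct translates are distinct geodesics, and a given geodesic of bounded length meets only boundedly many faces.
\end{enumerate}

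The main obstacle is step (3): proving that nothing shorter than $\ell_0$ exists in $S_n$, i.e.\ that $c$ is actually a systole. Short closed geodesics correspond to short closed combinatorial paths in $\Sigma_n$. Such a path is either local (contained in a ball of bounded radius, hence lifting to the building, which is CAT(0) with a controlled local structure) or has length at least the injectivity radius of $X_n$. I would first choose $\Lambda_n$ so that the injectivity radius of $X_n$ tends to infinity, as congruence subgroups do. This forces any non-local short curve to be long. It then remains to enumerate the finitely many local configurations, which take place in a fixed bounded piece of the hyperbolic tiling lifted from the building. There one checks by direct hyperbolic trigonometry that $c$ is the shortest essential curve. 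Those local configurations are exactly the curves that fail to be null-homotopic in the surface even though they are null-homotopic in the building.
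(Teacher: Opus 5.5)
Your outline has the same architecture as the paper: a surface built from chambers of a finite $\tilde A_2$ quotient, hyperbolized with angle $\pi/7$, a fixed short curve as systole, and a count via a free action of a large finite group. However, the step that carries all the content is assumed rather than proved. Step (1) only says that each $X_n$ \emph{should} contain a Hamiltonian surface, and nothing in the proposal produces one.

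Steps (2)--(4) also need more than one surface for each $n$. They need all the $\Sigma_n$ to be coverings of a single fixed surface, so that $S_0$, the curve $c$ lifting to every $S_n$, and a group acting with boundedly many orbits make sense uniformly in $n$. Your phrase that the finite quotients of $\Gamma$ are ``to be extracted from'' a building lattice hides exactly this point. What is needed is a surjection from a surface group onto a residually finite group $\Lambda$ acting freely and cocompactly on the building. A $\Lambda$-periodic Hamiltonian surface provides this, because it contains the whole $1$-skeleton of $X/\Lambda$. Such surfaces are not automatic: every vertex link of the surface must be a Hamiltonian cycle in the building's link (for $q=2$, a $14$-cycle in the Heawood graph), consistently along edges and $\Lambda$-invariantly. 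Likewise, congruence subgroups of an arbitrary lattice are of no use unless they preserve the surface.

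The paper supplies exactly this. Take the Cartwright--Mantero--Steger--Zappa group $G$, which acts simply transitively on the vertices of the building $X$ of $\PGL_3(\IF_2((T)))$. Its type-preserving index-$3$ subgroup $G_3$ gives $X/G_3$ with $3$ vertices, $21$ edges $a_{i,j}$ and $21$ chambers $a_{i,j}a_{i+1,j+1}a_{i+3,j+2}$. Discarding the $7$ chambers with $j=p$ leaves a closed surface $\Sigma_p$ with $\chi=-4$ whose vertex links are Hamiltonian $14$-cycles (Lemma~\ref{L - surface Sp}). Its preimage $S\subset X$ is a connected $G_3$-periodic surface. One then sets $S_n=S/K_n$ for finite-index normal subgroups $K_n$ of the linear, hence residually finite, group $G_3$. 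The length-$3$ curves are the boundaries of the omitted chambers, one through each edge, giving $7|G_3/K_n|$ of them against $\chi(S_n)=-4|G_3/K_n|$.

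Once $S$ exists, your ``main obstacle'' (3) is unnecessary for the linear bound. When the injectivity radius of $X/K_n$ is large compared with $\mathrm{sys}(S)$, every short loop of $S_n$ lifts to a closed loop in $S$, so $\mathrm{sys}(S_n)=\mathrm{sys}(S)$. Each $G_3$-orbit of systoles of $S$ (stabilizers are trivial) then descends to $|G_3/K_n|$ distinct systoles of $S_n$, whatever those systoles are. This also sidesteps a real subtlety in identifying $c$. A chord of a Hamiltonian $14$-cycle in the Heawood graph joins vertices at cycle distance $5$ or $7$, and distance $5$ always occurs. So after hyperbolization some omitted-chamber boundaries make an angle $5\pi/7<\pi$ at a vertex and are not themselves geodesics.

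(For the bare statement, even the building can be dispensed with. Take the degree-$n$ cyclic cover of a fixed closed hyperbolic surface $S_0$ of genus $g_0$ defined by a surjection $\pi_1(S_0)\to\ZI/n\ZI$ killing a systole $\alpha$. It has systole $\mathrm{sys}(S_0)$, realized by the $n$ lifts of $\alpha$, and genus $n(g_0-1)+1$. The building is what gives the Hamiltonian structure and the explicit combinatorial count $\frac72(g_n-1)$.)
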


The systole in these surfaces is in fact constant in $n$ equal to $\ell\approx 8.69$.

The proof of the theorem relies on the construction of a certain type of regular surfaces that are associated with a lattice in $\PGL_3(\IF_2((T)))$.  These surfaces are ``Hamiltonian surfaces'', in the sense of \cite{ham}. They will visit every vertex and every edge of a large quotient of the Bruhat--Tits building associated with    $\PGL_3(\IF_2((T)))$, exactly once. 
 
The notion of Hamiltonian surfaces derives from a new construction of 2-complexes using  an operation of ``systolic filling'', in a finite covering space associated with a given surface of infinite genus. This approach was first studied in  \cite{torus} for systolic fillings associated with a pinched torus, and later in \cite{ham}, for systolic fillings of hyperbolic surfaces of infinite genus, naturally associated with the group $\mathrm{Aut}(F_2)$.

\subsection{On AI usage} No use of AI was made in the conception or the writing of this paper.

\section{Construction of the Hamiltonian surface}

By Hamiltonian surface in a 2-complex $X$, we mean a surface which visits every vertex and every edge precisely once. A Hamiltonian surface is said to be periodic if it endowed with a free and cocompact action of a countable group. 

\begin{theorem}\label{T - ham A2}
The Bruhat--Tits building $X$ of type $\tilde A_2$ over the field $\IF_2((T))$ contains three distinct periodic Hamiltonian surfaces, which are cyclically permuted by a group $G$ of automorphisms of $X$ that acts simply transitively by type-rotating automorphisms. 
\end{theorem}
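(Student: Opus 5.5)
The plan is to reduce the statement to a combinatorial problem in the vertex links of $X$, and then to solve that problem using a triangle presentation.

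\emph{Local reduction.} Every vertex link of $X$ is the incidence graph of the Fano plane $\mathbb P^2(\IF_2)$, i.e.\ the Heawood graph. It has $14$ vertices and $21$ edges. Each edge of $X$ lies in exactly $3$ triangles.

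Let $\Sigma\subset X$ be a subcomplex that is a surface containing every vertex and every edge of $X$. Then at each vertex $v$ the link $\mathrm{lk}_\Sigma(v)$ is a single cycle through all $14$ vertices of the Heawood graph, that is, a Hamiltonian cycle. It uses $14$ of the $21$ edges. The remaining $7$ edges form a perfect matching.

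Equivalently, every edge of $X$ lies in exactly two triangles of $\Sigma$ and in exactly one triangle outside it. So I will build the three surfaces from a $3$-colouring $c$ of the triangles of $X$ with colours in $\ZI/3$, satisfying two conditions:
\begin{enumerate}[(i)]
\item at every edge, the three triangles containing it receive three distinct colours;
\item at every vertex, the union of any two colour classes in the link is a single Hamiltonian $14$-cycle.
\end{enumerate}
Condition (i) says that at every vertex the colouring induces a $1$-factorization of the Heawood graph. Condition (ii) says that this $1$-factorization is perfect.

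I then set $\Sigma_i=\{\text{triangles with } c\neq i\}$. Condition (i) makes each $\Sigma_i$ a surface (two triangles per edge). Condition (ii) makes each $\Sigma_i$ Hamiltonian and connected near every vertex. The three surfaces are distinct because their complements are disjoint and nonempty.

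\emph{Choice of $G$.} For $G$ I take a Cartwright--Mantero--Steger--Zappa triangle presentation over $\IF_2$. The generators $a_x$ are indexed by the $7$ points of $\mathbb P^2(\IF_2)$. The relations are $a_xa_ya_z=1$ for $(x,y,z)$ in a set $\mathcal T$ of $7$ triples, where $\mathcal T$ comes from a Singer-cycle type construction.

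Such a group acts simply transitively on the vertices of $X$ by type-rotating automorphisms. The type gives a homomorphism $\tau\colon G\to\ZI/3$. Its kernel $H=\ker\tau$ has index $3$, acts type-preservingly, and acts freely and cocompactly.

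I will define $c$ so that $c(g\Delta)=c(\Delta)+\tau(g)$ for all $g\in G$. Then $H$ preserves each $\Sigma_i$, which gives the periodicity. An element $g$ with $\tau(g)=1$ maps $\Sigma_i$ to $\Sigma_{i+1}$, so $G$ permutes the three surfaces cyclically.

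Since $G$ is vertex-transitive, $c$ is determined by the colours of the $21$ triangles at the base vertex $v_0$, i.e.\ by an edge $3$-colouring of the Heawood graph $\mathrm{lk}(v_0)$. Those $21$ triangles correspond to the relators and their cyclic shifts. The equivariance rule then assigns a colour to every triangle of $X$.

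\emph{Verification steps.} I would carry these out in order.
\begin{enumerate}[(a)]
\item Check that the colouring is well defined. Each triangle contains one vertex of each type, and the translates of a triangle through $v_0$ by the three elements moving $v_0$ to its other vertices must give compatible colours. This amounts to requiring that the colour of the triangle for $(x,y,z)$ shifts by exactly the type increment under cyclic permutation of $(x,y,z)$. I would first try $c(x,y,z)=$ (a fixed function of the Singer exponent), chosen to be compatible with the type shift.
\item Check condition (i) at $v_0$, and hence everywhere by transitivity.
\item Check that the two-colour unions in $\mathrm{lk}(v_0)$ are $14$-cycles. This is condition (ii). It is a finite check on a graph with $21$ edges. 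The Heawood graph is known to admit perfect $1$-factorizations; I expect to use one invariant under the order-$7$ Singer cycle, which is natural for the CMSZ presentation.
\end{enumerate}

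\emph{Main obstacle.} The main difficulty is matching the two requirements simultaneously. The perfect $1$-factorization must be chosen so that the equivariance rule $c(g\Delta)=c(\Delta)+\tau(g)$ is consistent; this is the compatibility in step (a). Among the CMSZ presentations over $\IF_2$, I would search for the one and the colouring for which the colour of a relator triple rotates correctly under cyclic shift of the triple. I expect to do this by explicit inspection of the Fano-plane triples.
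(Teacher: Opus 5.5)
Your reduction is the mechanism of the paper's proof. The paper passes to the type-preserving index-$3$ subgroup $G_3$, with generators $a_{i,j}$ and relators $a_{i,j}a_{i+1,j+1}a_{i+3,j+2}$. The index $j$ is exactly a $\ZI/3\ZI$-colouring of the chambers with $c(g\Delta)=c(\Delta)+\tau(g)$. The surface $\Sigma_p$ is the complement of colour class $p$, and type-rotation permutes the three surfaces cyclically.

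What your proposal lacks is the one step with actual content. You never fix the colouring, and (i) and (ii) are left as the hoped-for outcome of a search. Moreover, the difficulty you single out, compatibility (a), is not a real constraint. Write $\Delta_{(x,y,z)}$ for the chamber with vertices $v_0$, $a_xv_0$, $a_xa_yv_0$. Then $\Delta_{(x,y,z)}=a_x\Delta_{(y,z,x)}$ and $\tau(a_x)=1$. So any colours given to one cyclic representative of each of the seven relators extend uniquely by the rule $c(y,z,x)=c(x,y,z)-1$. This fails only for a relator $(x,x,x)$, where $a_x$ would be an order-$3$ element rotating a chamber. No such relator occurs in the Singer-type presentation.

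To close the argument, take the presentation with relators $a_ia_{i+1}a_{i+3}$, $i\in\ZI/7\ZI$. Colour each chamber by the type of the vertex from which its boundary reads $a_ia_{i+1}a_{i+3}$, rather than a cyclic shift of it; this is the paper's $j$.
\begin{itemize}
\item Condition (i): the edge labelled $a_i$ leaving a vertex of type $j$ is the first, second and third letter of chambers of colours $j$, $j-1$, $j-2$.
\item Condition (ii): at a vertex of type $t$, let $E_k$ be the incoming and $S_k$ the outgoing edge labelled $a_k$. The three colour classes in the link are the matchings $M_d=\{E_kS_{k+d}\}$ for $d=1,2,4$, carrying colours $t-1$, $t+1$, $t$ respectively. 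The union $M_d\cup M_{d'}$ is the walk $E_k\to S_{k+d}\to E_{k+d-d'}\to\cdots$. This walk returns to $E_k$ only after $7$ steps, because $d-d'\neq 0$ in $\ZI/7\ZI$ and $7$ is prime. Hence any two colour classes form a single $14$-cycle, which is the paper's ``three $14$-gons''.
\end{itemize}

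One caution on your search: restrict to presentations known to embed in $\mathrm{PGL}_3(\IF_2((T)))$, as the paper does with the explicit CMSZ matrices. Some triangle presentations with $q=2$ give the building of $\mathrm{PGL}_3(\mathbb{Q}_2)$ instead.
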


The group $G$ in this theorem was introduced in \cite{CMSZ1}. It will be described explicitly in the proof.

\begin{proof}
We first review the construction of an explicit embedding into $\mathrm{PGL}_3(\IF_2((T)))$ of an amalgamated group (of Tits type) following the work of \cite{Tits, KMW, Ro, CMSZ1, CMSZ2}. Namely, we consider the Frobenius group of order 21  defined by:
\[
F=\ip{p,s\mid p^7=s^3=1, sps^{-1}=p^2}\simeq \ip{p,s\mid p^7=s^3=1, sps^{-1}=p^4}
\]
 in which $\ZI/3\ZI$ acts on $\ZI/7\ZI$ by conjugacy.  It is well--known \cite{Ro} that two distinct copies of $\ZI/3\ZI$ in $F$ generate a chamber system for the building of $\SL_3(2)$.

Furthermore, Tits has shown in \cite{Tits} (see also \cite[p.\ 48]{Ro}) that there are three ways to amalgamate three copies of $F$ together along two distinct copies of $\ZI/3\ZI$ in each group, in order to define a group, which does not collapse, and acts by automorphisms of the Bruhat--Tits building of $\mathrm{PGL}_3(\IF_2((T)))$. 

In \cite{KMW}, Kohler, Meixner, and Wester have shown that one of these three amalgams  embeds in $\mathrm{PGL}_3(\IF_2((T)))$. Furthermore, they use this embedding to construct exponentially many pairwise non-isomorphic finite transitive chamber systems of type $\tilde A_2$. It will be convenient to use a closely related embedding of the following group $G$, which was introduced by Cartwright, Mantero, Steger,  and  Zappa in \cite{CMSZ1,CMSZ2}. Thus, we consider the group $G$ on  seven generators $a_i$ and seven relations:
\[
a_ia_{i+1}a_{i+3}
\]
where $i\in \ZI/7\ZI$. This presentation is an $\tilde A_2$-triangle presentation in the sense of \cite[p.\ 149]{CMSZ1}. The automorphism group of this triangle presentation is $F$ and the extension $\tilde G$ of $G$ by $F$ is generated by the $a_i$, $p$ and $s$ with commutation relations:
\[
pa_ip^{-1}=a_{p(i)}\ \  \ sa_is^{-1}=a_{s(i)}\ \ \
\]
where $p(i)=i+1$ and $s(i)=2i$.

The following explicit embedding theorem is established in \cite[\ts 4, Example]{CMSZ1} and described in \cite[p.\ 177]{CMSZ2}. 

\begin{theorem}
The map 
\[
a_0\mapsto \mat{T&0&0\\ 0&T&1+T\\0&1+T&1}, \ \ \ \ p\mapsto \mat{1&0&1\\ 1&0&0\\0&1&0},\ \ \ \ s\mapsto \mat{1&0&0\\ 0&1&1\\0&1&0}
\]
desfines to  an embedding of $\tilde G$ into $\mathrm{PGL}_3(\IF_2((T)))$.
\end{theorem}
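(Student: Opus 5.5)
The plan has two parts. First, show the map defines a homomorphism by checking the defining relations of $\tilde G$ on the given matrices. Then show it is injective by a building-theoretic argument.

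For the homomorphism, define $A_0$, $P$, $S$ as the three displayed matrices and set
\[
A_i = P^i A_0 P^{-i}, \qquad i\in \ZI/7\ZI .
\]
With this definition the relation $pa_ip^{-1}=a_{i+1}$ holds by construction. The remaining relations have to be verified up to scalars, since we work in $\PGL_3$:
\begin{enumerate}[(i)]
\item $P^7$ and $S^3$ are scalar.
\item $SPS^{-1}=P^2$ holds projectively.
\item $SA_0S^{-1}=A_0$, which is the condition $s(0)=0$.
\item $A_0A_1A_3$ is scalar.
\end{enumerate}
Given (i) and (ii), conjugating (iii) by powers of $P$ yields $SA_iS^{-1}=A_{2i}$ for all $i$. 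Conjugating (iv) by $P^i$ yields all seven triangle relations $A_iA_{i+1}A_{i+3}$. Relations (i) and (ii) are finite computations over $\IF_2$. Indeed, $P$ is the companion matrix of a degree-3 polynomial over $\IF_2$. This polynomial is primitive, so its root generates $\IF_8^\times$, which has order 7; hence $P^7=I$. The checks for $S$ are similar. Relations (iii) and (iv) are short computations in $M_3(\IF_2[T])$. For example, one expects $A_0A_1A_3=T^2(1+T)\,I$ or a similar scalar, and this is the only algebra I would actually carry out.

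For injectivity, work in the Bruhat--Tits building $X$ of $\PGL_3(\IF_2((T)))$. Its vertices are homothety classes of $\IF_2[[T]]$-lattices in $\IF_2((T))^3$. Let $v_0=[\IF_2[[T]]^3]$. The matrices $P$ and $S$ lie in $\mathrm{GL}_3(\IF_2)\subset \mathrm{GL}_3(\IF_2[[T]])$, so they fix $v_0$ and generate a copy of $F$ in the stabilizer.

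Next, check that $\det A_0$ has $T$-adic valuation $1$ modulo $3$. Concretely, $\det A_0 = T(T - (1+T)^2)$, whose valuation is exactly $1$. It follows that $A_0v_0$ is a neighbour of $v_0$ of type $+1$. The seven vertices $A_iv_0$ are then the seven neighbours of type $1$ of $v_0$, indexed by the points of the projective plane over $\IF_2$. This must be confirmed by computing the line or plane of $\IF_2^3$ that $A_0$ determines and applying $P$.

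The triangle relations say that $v_0$, $a_iv_0$, $a_ia_{i+1}v_0$ span a chamber exactly when $(i,i+1,i+3)$ is a triangle of the presentation. This is the local criterion of \cite{CMSZ1}. Their theorem then gives that $G$ acts simply transitively on the vertices of $X$. Consequently the subgroup generated by the $A_i$ is isomorphic to $G$, the action is free, and $G$ is discrete and torsion-free.

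Finally, $\tilde G = G\rtimes F$. The subgroup $F$ embeds because it acts faithfully on the neighbours of $v_0$, and $F\cap G$ is trivial since $G$ acts freely. Hence the map is injective on $\tilde G$.

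I expect the main obstacle to be the identification step: showing that the seven vertices $A_iv_0$ are exactly the type-1 neighbours of $v_0$, with incidence matching the triangle presentation. If the direct route is awkward, I would instead cite \cite[\ts 4]{CMSZ1}, where precisely this verification is done; there the statement follows from the general result that a triangle presentation together with such a matrix realization yields a simply transitive type-rotating action.
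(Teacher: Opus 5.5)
Your plan is correct and is essentially the verification carried out in \cite[\ts 4, Example]{CMSZ1}, which the paper simply cites rather than reproving; all the relation checks go through, except that the scalar is $A_0A_1A_3=T(1+T+T^2)\,I$, as $\det A_0=T(1+T+T^2)$ forces, and not $T^2(1+T)I$. The step you flag as the main obstacle is in fact immediate: $P$ acts as a Singer cycle on $\IF_2^3$, so once $v(\det A_0)=1$ makes $A_0v_0$ a type-$1$ neighbour, the vertices $P^iA_0v_0$ run through all seven type-$1$ neighbours. The substantive input you borrow from \cite{CMSZ1} is really the local-to-global step: the link isomorphism at $v_0$, together with simple connectivity of $X$, makes the equivariant map from the Cayley complex of $G$ to $X$ a covering, hence a bijection on vertices.
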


The relation with the embedding of Kohler, Meixner, and Wester is explained in the last part of \ts 4 in \cite{CMSZ1}.

It is shown in \cite{CMSZ1} that the group $G$ acts simply transitively on the vertices of the building $X$ of $\mathrm{PGL}_3(\IF_2((T)))$. Furthermore, the action is by type-rotating automorphisms. In fact, the type-rotating  assumption essentially yields a presentation of $G$ of the form given above, namely the $\tilde A_2$-triangle presentation of $G$. For all this, we refer \cite[Theorem 3.1]{CMSZ1}. 

The type-rotating assumption implies that the group $G$ admits a subgroup $G_3$ of index 3, which act by type-preserving automorphisms. Furthermore, the presentation of $G_3$ is easily computed, following \cite{CMSZ1}.  In the particular case above, we find a group on 21 generators $a_{i,j}$ and 21 relations, as follows:
\[
a_{i,j}a_{i+1,j+1}a_{i+3,j+2}
\]
where $i\in \ZI/7\ZI$ and $j\in \ZI/3\ZI$. We let $X_3$ denote the corresponding finite chamber system.

Our main observation is: 

\begin{lemma}\label{L - surface Sp}
 The set $\Sigma_p$ of 14 chambers $a_{i,j}a_{i+1,j+1}a_{i+3,j+2}$ for all $i\in \ZI/7\ZI$ and all  $j$ omitting the value $p$ in $\ZI/3\ZI$ defines a Hamiltonian surface in $X_3$.
\end{lemma}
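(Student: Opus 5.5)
The plan is to identify the cell structure of $X_3$, show that $\Sigma_p$ is a closed pseudo-surface containing every edge and vertex, and then show by a link computation that it is a genuine surface.

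\textbf{Cell structure of $X_3$.} I take $X_3$ to be the quotient $G_3\backslash X$. Since $G$ acts simply transitively on vertices and $G_3$ is the type-preserving subgroup of index $3$, the quotient $X_3$ has exactly three vertices $v_0,v_1,v_2$, one of each type. Its edges are the $21$ generators $a_{i,j}$, and its $2$-cells are the $21$ triangles $a_{i,j}a_{i+1,j+1}a_{i+3,j+2}$. From the type-rotating convention of \cite{CMSZ1}, I expect $a_{i,j}$ to be an edge from $v_j$ to $v_{j+1}$. This is consistent with the relation being a closed loop $v_j\to v_{j+1}\to v_{j+2}\to v_j$. It follows that $\Sigma_p$ contains all three vertices, and a surface through them visits each vertex exactly once.

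\textbf{Every edge lies in exactly two triangles of $\Sigma_p$.} Fix an edge $a_{k,l}$. It occurs in exactly three relations of $X_3$: as the first letter of relation $(k,l)$, as the second letter of relation $(k-1,l-1)$, and as the third letter of relation $(k-3,l-2)$. The second indices of these relations are $l$, $l-1$ and $l-2$, so each value of $\ZI/3\ZI$ occurs exactly once. Removing the seven relations with second index $p$ therefore removes exactly one of the three triangles at every edge. Hence in $\Sigma_p$ every one of the $21$ edges lies in exactly two of the $14$ triangles. As a check, $14\cdot 3/2=21$.

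\textbf{$\Sigma_p$ is a surface.} The previous step makes $\Sigma_p$ a closed pseudo-surface with Euler characteristic $3-21+14=-4$. It is a genuine surface exactly when the link of each of the three vertices is connected. At $v_t$ there are $42/3=14$ edge-ends and $14$ triangle corners, so the link is a $2$-regular graph on $14$ vertices, and I must show it is a single $14$-cycle. I would list, for each triangle $(i,j)$, its corner at $v_t$, which is the pair of consecutive letters meeting at $v_t$. I would then follow the rotation at $v_t$: leave a triangle through one edge at $v_t$, pass to the other triangle of $\Sigma_p$ containing that edge (found by the index shifts above), and continue.

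\textbf{Main obstacle: connectivity of the links.} I expect each step of this rotation to act on the index $i$ by a translation of $\ZI/7\ZI$. Two consecutive steps should then compose to a translation $i\mapsto i+c$ with $c\neq 0$, for instance $c=\pm 2$ or $\pm 3$ depending on $t$ and $p$. Since $7$ is prime, such a translation has a single orbit, which would give one cycle of length $2\cdot 7=14$. Checking this for each $t$ and $p$ is a finite bookkeeping exercise, and it is where an error could enter.

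\textbf{Conclusion and periodicity.} If the links are single cycles, $\Sigma_p$ is a closed surface with $\chi=-4$ (of genus $3$ if orientable). Its preimage in $X$ is a periodic Hamiltonian surface with $G_3$ acting freely and cocompactly. Finally, conjugation by an element of $G\setminus G_3$ shifts $j$ and so permutes $\Sigma_0,\Sigma_1,\Sigma_2$ cyclically. This fact is needed for Theorem~\ref{T - ham A2} rather than for the lemma itself.
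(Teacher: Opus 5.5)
Your proposal is correct and follows the same route as the paper: each $a_{k,l}$ lies in exactly two chambers of $\Sigma_p$ because the three chambers containing it have distinct second indices, and the vertex links of $\Sigma_p$ are single $14$-cycles (the paper's ``union of three $14$-gons''). The bookkeeping you defer does work out: at $v_t$ the link of $X_3$ joins the incoming edge $a_{k,t-1}$ to the outgoing edge $a_{k+d,t}$ for $d\in\{1,2,4\}$, with each $d$ coming from exactly one value of $j$, so deleting $j=p$ leaves two differences $d_1\neq d_2$, and the walk advances $k$ by $d_1-d_2\not\equiv 0 \pmod 7$, giving one $14$-cycle.
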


\begin{proof} 
Since one value of $j$ is omitted, it is clear that the generator $a_{ij}$ appears only twice in the given set of chambers, for all $i\in \ZI/7\ZI$ and $j\in\ZI/3\ZI$. To prove that $X_p$ is a hamiltonian surface, it suffices to prove that its links form a Hamilonian cycle in the links of $X_3$. 

The generic chamber $C_{ij}$ in $\Sigma_p$ is given by:

\begin{figure}[H]
\begin{tikzpicture}[
    scale=1, 
    vertex/.style={circle, fill=black, inner sep=1.5pt},
    midarrow/.style={midway, sloped, allow upside down, inner sep=0pt, node contents={\tikz\draw[-Latex] (0,0) -- (0.1,0);}}
]
    \coordinate (A) at (0,0);
    \coordinate (B) at (2,0);
    \coordinate (C) at (1,1.732);

    \draw (A) -- (B) node[midarrow] node[midway, below=4pt] {$a_{i,j}$};
    \draw (B) -- (C) node[midarrow] node[midway, right=5pt] {$a_{i+1,j+1}$};
    \draw (C) -- (A) node[midarrow] node[midway, left=5pt]  {$a_{i+3,j+2}$};

    \node[vertex] at (A) {};
    \node[vertex] at (B) {};
    \node[vertex] at (C) {};
\end{tikzpicture}
\caption{Chamber $C_{ij}$}
\end{figure}

By definition, the link of $\Sigma_p$ is the graph with edges:
\[
(a_{ij}^-,a_{i+1,j+1}^+), (a_{i+1,j+1}^-,a_{i+3,j+2}^+),(a_{i+3,j+2}^-,a_{ij}^+)
\] 
where $i\in \ZI/7\ZI$ and $j\in \ZI/3\ZI\setminus p$. It is straightforward to verify that this graph is a union of three 14-gons. 
\end{proof}

To conclude the proof of the theorem, it suffices to prove that $G$ permutes cyclically the three lifts of the surfaces $\Sigma_p\subset X_3$ to the universal cover $X$. This follows because the action is type-rotating. It is clear that each lift is a periodic Hamiltonian surface in $X$ on which $G_3$ act by automorphisms.
\end{proof}

The following result was stated without proof in \cite{ham}, we can now provide a proof. 

\begin{corollary}
The Bruhat--Tits building of type $\tilde A_2$ over $\IF_2((T))$ contains uncountably many Hamiltonian surfaces.
\end{corollary}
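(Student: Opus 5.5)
The idea is to get uncountably many Hamiltonian surfaces by gluing local pieces of the three periodic Hamiltonian surfaces of Theorem~\ref{T - ham A2}, with the choice of piece made independently along an infinite family of disjoint regions.

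Recall the local description from Lemma~\ref{L - surface Sp}. In $X_3$, every chamber $a_{i,j}a_{i+1,j+1}a_{i+3,j+2}$ is determined by its index $j\in\ZI/3\ZI$. The surface $\Sigma_p$ consists exactly of the chambers with $j\neq p$. In the universal cover $X$, each chamber therefore carries a well-defined label $j$, obtained by pulling back the label under the covering $X\to X_3$. A subcomplex of $X$ is a Hamiltonian surface exactly when its link at every vertex is a Hamiltonian cycle in the link of $X$ (the incidence graph of $\IF_2^3$, with 14 vertices). The proof of Lemma~\ref{L - surface Sp} shows this holds for the three lifts $\tilde\Sigma_0,\tilde\Sigma_1,\tilde\Sigma_2$ of the surfaces $\Sigma_p$.

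The plan is then as follows.

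\textbf{Step 1.} Since $\tilde\Sigma_p$ omits only the chambers labelled $p$, any two of these surfaces intersect in a surface with boundary: the chambers of the remaining third label. For example, $\tilde\Sigma_0\cap\tilde\Sigma_1$ consists of the chambers labelled $2$.

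\textbf{Step 2.} Look for a bi-infinite family of pairwise disjoint, separated "switching regions" $R_k\subset X$, for $k\in\ZI$. In each $R_k$ we want two distinct local choices of chambers, say the pieces of $\tilde\Sigma_0$ and of $\tilde\Sigma_1$ inside $R_k$. These two choices must agree on a neighbourhood of $\partial R_k$ and both yield Hamiltonian cycles in every vertex link inside $R_k$. Outside $\bigcup R_k$ we keep a fixed background surface. The natural candidate for $R_k$ is a finite subcomplex on which $\tilde\Sigma_0$ and $\tilde\Sigma_1$ differ, but whose boundary vertex links coincide. An alternative is to use elements of $G$ (type-rotating, hence permuting the $\tilde\Sigma_p$) restricted to suitable half-spaces.

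\textbf{Step 3.} Since the links are local, each choice sequence $\omega\in\{0,1\}^{\ZI}$ yields a subcomplex $S_\omega$ whose links are all Hamiltonian cycles, hence a Hamiltonian surface. Distinct sequences give distinct surfaces, because they differ in the chambers chosen in some $R_k$. This produces $2^{\aleph_0}$ Hamiltonian surfaces.

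The main obstacle is Step 2: finding switching regions with compatible boundaries. Globally, $\tilde\Sigma_0$ and $\tilde\Sigma_1$ never agree at a vertex link, since they differ in which chambers they omit. So a naive cut-and-paste between them fails. I would therefore first try a different kind of local move. Inside a single vertex link (the Heawood graph), the Hamiltonian cycle of $\tilde\Sigma_p$ can be replaced by another Hamiltonian cycle that uses the same edges except on a small cycle. One would then propagate this change along a "strip" in $X$, for instance a periodic flat strip invariant under an element $g\in G_3$. The strip is chosen so that its boundary links are unchanged while the links interior to the strip are rerouted. This would produce an alternative surface $\tilde\Sigma_p'$ that differs from $\tilde\Sigma_p$ only along the strip. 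Infinitely many disjoint translates of the strip (e.g.\ $h^k$-translates for some $h\in G_3$ moving it off itself) then give independent binary switches, and Step 3 applies.

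If this fails, a fallback is to use the tree-like branching of $X$: every edge lies in 3 chambers. The surface construction could then be done inductively outward from a vertex, ball by ball. At each stage one extends a partial surface whose link condition holds in the interior, and shows that there are at least two valid extensions infinitely often. A König-type argument then yields a Cantor set of complete Hamiltonian surfaces. The key verification is that extension is always possible, which could be checked using local homogeneity under $G$ together with the finite-link computation in Lemma~\ref{L - surface Sp}.
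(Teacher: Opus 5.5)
\paragraph{There is a genuine gap.} Step~3 is only bookkeeping. All the content of your argument sits in Step~2 (or in the König fallback), and you do not carry that out. Moreover, the switching scheme as formulated cannot work.

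Let $\Sigma,\Sigma'$ be Hamiltonian surfaces. Every edge of $X$ lies in exactly two chambers of each. So the set $D$ of chambers belonging to exactly one of them meets every edge in an even number of chambers, i.e.\ $D$ is a locally finite mod $2$ $2$-cycle. If $D$ were finite, it would lie in $Z_2(X;\IF_2)=H_2(X;\IF_2)$, since $X$ has no $3$-cells. This group vanishes because $X$ is contractible, so $D=\emptyset$. Hence two distinct Hamiltonian surfaces never agree outside a finite set of chambers, and finite switching regions $R_k$ over a common background do not exist.

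The same mechanism rules out the strip. The unique finite filling of a finite $1$-cycle stays in a ball around it. Indeed, at a farthest point of the filling off the cycle, the link of the filling contains a circuit. By the CAT(0) link condition this circuit contains a direction at angle $\pi$ from the centre, so one could go farther. Now cut $D$ along level sets of the projection to a geodesic line $L$. The piece of $D$ between two far-apart cuts then equals the sum of the two small fillings. This forces a mod $2$ cycle supported in a bounded neighbourhood of $L$ to vanish. So no Hamiltonian surface differs from $\tilde\Sigma_p$ only near an axis of some $g\in G_3$.

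The König-type fallback leaves unproved exactly what is needed: that admissible partial surfaces on balls always extend to global surfaces, and that genuine branching occurs infinitely often.

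Step~1 is also inaccurate. The chambers of a single label meet every edge exactly once. They therefore form triangles meeting pairwise only in vertices: at each vertex their link is a perfect matching of the Heawood graph. They do not form a surface with boundary.

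\paragraph{The missing idea.} The paper uses the non-discreteness of the automorphism group instead of local surgery. Let $K$ be a vertex stabilizer in $\PGL_3(\IF_2((T)))$, e.g.\ the image of $\GL_3(\IF_2[[T]])$, which is uncountable. Let $\Sigma$ be one of the surfaces of Theorem~\ref{T - ham A2}. Each $k\Sigma$ is again a Hamiltonian surface, and $|K\Sigma|=[K:K_\Sigma]$.

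To make the paper's one-line orbit--stabilizer argument airtight, you need $K_\Sigma$ to be countable. This holds for the following reasons:
\begin{itemize}
\item The action on $X$ is faithful and $\Sigma$ contains every vertex of $X$. So an element of $K_\Sigma$ is determined by its restriction to $\Sigma$.
\item An automorphism of the connected triangulated surface $\Sigma$ is determined by the image of one chamber together with its vertices. Each edge lies in exactly two chambers of $\Sigma$, so fixing one chamber forces fixing its neighbours, and so on.
\item This leaves only countably many possibilities.
\end{itemize}
Hence $K\Sigma$ is uncountable.

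Note that when $k$ fixes a large ball pointwise, $k\Sigma$ agrees with $\Sigma$ on that ball and differs from it on an unbounded region. By the cycle argument above, this kind of non-local modification is unavoidable, and your local switches cannot produce it.
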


\begin{proof}
If $K$ denotes a vertex stabilizer in $\mathrm{PGL}_3(\IF_2((T)))$ and $\Sigma$ is one of the three Hamiltonian surfaces in $X$, then the orbit $K\Sigma$ is uncountable, by the orbit--stabilizer theorem. 
\end{proof}

\section{Proof of Theorem \ref{T - th1}}

We establish the following theorem which is our main result. 

\begin{theorem}\label{T - th3} 
There exists a sequence $S_n$ of simplicially hyperbolic homogeneous surfaces of genus $g_n\to \infty$ with bounded systole $\ell=3$ such that 
\[
\Kiss(S_n) = \frac 7 2(g_n-1)
\] 
for all $n\geq 1$.\end{theorem}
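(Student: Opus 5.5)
We take $S_n$ to be finite quotients of one of the periodic Hamiltonian surfaces $\Sigma\subset X$ from Theorem \ref{T - ham A2}. Fix a lift $\Sigma$ of $\Sigma_p$. By Lemma \ref{L - surface Sp}, $\Sigma$ is a triangulated surface in which every vertex link is a $14$-gon, since the link of $\Sigma_p$ is a union of three $14$-gons, one for each vertex type. Thus $\Sigma$ is the $\{3,14\}$ tiling of the hyperbolic plane. We give it the simplicial metric, meaning the combinatorial metric on the $1$-skeleton, or equivalently equilateral triangles of angle $2\pi/14$.

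Since $G_3$ is residually finite (it is a finitely generated linear group inside $\mathrm{PGL}_3(\IF_2((T)))$), we can choose a decreasing sequence of finite index normal subgroups $\Gamma_n\lhd G_3$ with trivial intersection. We then set $S_n=\Sigma/\Gamma_n$. Each $S_n$ is homogeneous, because $G_3/\Gamma_n$ acts vertex-transitively on each type, and by Theorem \ref{T - ham A2} $G$ permutes the types cyclically. Its genus goes to infinity. The Euler characteristic is computed as follows. With $N=[G_3:\Gamma_n]$, $S_n$ has $14N$ triangles, $21N$ edges (each edge lies in two triangles) and $3N$ vertices (seven edges per $a_{i,j}$ in the link count). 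Hence
\[
2-2g_n=3N-21N+14N=-4N,
\]
so $N=(g_n-1)/2$.

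Next we identify the systoles. The closed edge-paths of length $3$ in $X_3$ are the boundaries of chambers, namely the relations $a_{i,j}a_{i+1,j+1}a_{i+3,j+2}$. In $\Sigma$, a $3$-cycle either bounds a face or is a non-contractible triangle made of edges of $\Sigma$. The key point is that the $7$ chambers with the omitted index $j=p$ are not faces of $\Sigma_p$, yet their three edges all lie in $\Sigma_p$. Indeed, the generators appearing in $a_{i,p}a_{i+1,p+1}a_{i+3,p+2}$ are all edges of $\Sigma_p$, because each $a_{ij}$ occurs in $\Sigma_p$. So these $7$ words give closed curves of length $3$ in $\Sigma_p$ that do not bound a face. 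Their lifts to $\Sigma$ are not closed: they are closed in $X$, but the corresponding elements of $G_3$ act on $\Sigma$. We therefore need to work in a quotient where they close up. I would choose $\Gamma_n$ so that it contains no $G_3$-conjugates of these words' relevant translations, or more naturally interpret the quotient $\Sigma/\Gamma_n$ inside $X/\Gamma_n$, where the edge paths are closed.

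The count is then as follows. Each vertex of $S_n$ is a base point of closed length-$3$ paths coming from the omitted chambers. There are $7N$ such triangles, and $7N=\frac72(g_n-1)$, which matches the claim.

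Two things remain to be checked. First, the systole is exactly $3$: no non-trivial loop of length $1$ or $2$ exists, because $X_3$ has no loops or bigons, given that the $a_{ij}$ are distinct and the only relations have length $3$. Second, these $7N$ triangles are pairwise non-homotopic in $S_n$ and non-contractible, and they are the only length-$3$ systoles. Non-contractibility comes from the fact that a contractible $3$-cycle in a $\{3,14\}$ triangulation must bound a single face, by a combinatorial Gauss--Bonnet argument with curvature $1-14/6<0$ at vertices. Distinctness comes from the fact that distinct simple triangles with different edge sets are non-homotopic in a negatively curved simplicial surface.

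I expect the main obstacle to be making precise why the omitted chambers yield closed curves in the surface quotient and choosing $\Gamma_n$ compatibly. I would first try to take $\Gamma_n$ inside $G_3$ acting on $X$, and observe that $S_n\subset X/\Gamma_n$ contains all edges of $X/\Gamma_n$ by the Hamiltonian property. Then every $3$-cycle of $X/\Gamma_n$ not bounding a face of $S_n$ is a systole, and there are $7N$ of them among the $21N$ chambers.
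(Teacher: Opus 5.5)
Your Euler characteristic count ($3N$ vertices, $21N$ edges, $14N$ faces, $7N$ omitted chambers) matches the paper, and your Gauss--Bonnet argument adds detail the paper leaves implicit. But there is a genuine gap in identifying the systoles. You justify the absence of short non-contractible loops, and the claim that closed edge paths of length $3$ are chamber boundaries, by appealing to $X_3$. Both statements are false there. $G$ is simply transitive on vertices and $G_3$ is type-preserving of index $3$, so $X_3=X/G_3$ has only $3$ vertices and $21$ edges, with $7$ parallel edges between each pair of vertices. It therefore has many bigons, and there are $7^3$ closed edge paths $v_0\to v_1\to v_2\to v_0$ but only $21$ chambers. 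The same happens in $X/\Gamma_n$ unless $\Gamma_n$ is deep enough.

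The missing ingredient is the one the paper uses. By residual finiteness, pick a finite index normal subgroup $K_0$ of $G_3$ such that balls of radius $2$ in $X$ embed in $X/K_0$, and take every $\Gamma_n\subseteq K_0$. Your hypothesis $\bigcap_n\Gamma_n=\{1\}$ gives this only for large $n$, and you never use it. With this choice, every closed edge path of length at most $3$ in $X/\Gamma_n$ lifts to a closed path in $X$. Since $X$ is a simplicial flag complex, such a path either backtracks or bounds a chamber. That chamber is either a face of $S_n$ or one of the $7N$ omitted chambers, and Gauss--Bonnet in the universal cover of $S_n$ shows that exactly the latter are non-contractible in $S_n$.

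A second error concerns the topology: $\Sigma$ is not the $\{3,14\}$ tiling of the hyperbolic plane. Because $\Sigma$ is Hamiltonian, its $1$-skeleton is the whole $1$-skeleton of $X$. So the boundary of every omitted chamber is already a closed $3$-cycle in $\Sigma$, and your assertion that these lifts ``are not closed'' is wrong. By your own Gauss--Bonnet argument such a cycle is non-contractible in $\Sigma$. Hence $\Sigma$ is a surface of infinite genus whose universal cover is the $\{3,14\}$ tiling.

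If $\Sigma$ really were a plane, then $\pi_1(S_n)=\Gamma_n$. The omitted-chamber loops, which lift to closed loops in $\Sigma$, would then be contractible in $S_n$, contradicting the count you want. Your remedy of choosing $\Gamma_n$ to avoid the ``translations'' of these words is vacuous, since they are relators of $G_3$. The correct mechanism is the paper's systolic filling: the systoles of $S_n$ are exactly the loops killed by $\pi_1(S_n)\to\pi_1(X/\Gamma_n)$, and $X/\Gamma_n$ is $S_n$ with one triangle attached along each systole. Your final paragraph is heading there, but it becomes a proof only once combined with the depth condition above.

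Separately, elements of $G\setminus G_3$ carry $\Sigma$ to one of the other two Hamiltonian surfaces, so they do not act on $S_n$. Your type-rotation argument for homogeneity therefore does not work as written.
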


We note that Theorem \ref{T - th1} follows immediately from this result by replacing every equilateral triangle in $S_n$ with the corresponding hyperbolic triangle of constant curvature and vertex angle $\frac{\pi}{7}$. Each systole of length 3 gives rise to a systole of length
\[
\ell= 3\times 2\mathrm{arccosh}(\frac{1}{2\sin(\pi/14)})\simeq  8.69
\]
in the corresponding smooth surface.

\begin{proof}
Let $X$ denote the Bruhat--Tits building of $\PGL_3(\IF_2((T)))$. Consider one of the three periodic hamiltonian surface $S$ in $X$ defined in Theorem  \ref{T - ham A2}. This surface is $G_3$-periodic, where $G_3$ is of index 3 in $G$.  We will need to use here the group $G$ defined in \cite{CMSZ1} and reviewed in the previous paragraph. 

Since $G_3$ is linear, it is residually finite. Therefore, there exists a finite index subgroup $K_0$ of $G_3$ for which the injectivity radius of the cover space $X\surj X/K_0$ is at least 2. This implies that the systole in $S/K_0$ is at least 3.

Let $(K_n)_{n\geq 0}$ be a decreasing sequence of normal finite index normal subgroups of $G_3$, and let $S_n:=S/K_n$.  The 2-complex $X_n:=X/K_n$ is obtained from $S_n$ by systolic filling in the sense of \cite{ham}. The systoles of $S_n$ have length 3, and $X_n$ is obtained canonically from $S_n$ by adjoining a triangle for every systole. This is in fact easy to see directly from the construction of $S$ in Theorem \ref{T - ham A2}.     Namely, in Lemma \ref{L - surface Sp}, the surface  $\Sigma_p$ is obtained by omitting the chambers corresponding to a single index $p\in \ZI/3\ZI$. The systolic filling of $S_n$ is obtained by adjoining all chambers which are lifts of the chambers corresponding to this index. In this construction, every systole is a boundary of such a chamber. It has length 3, and meets every edge precisely once. The residual finiteness of $G$ and the presence of $X$ is crucial to find the correct covering space of $S/G$.

It is clear that the surfaces $S_n$ are simplicially hyperbolic with (by construction) 14 equilateral triangles meeting at every vertex. Furthermore, they are homogeneous surfaces acted upon by the group $H_n:=G/K_n$

It remains to compute the number of systoles and the genus of $S_n$. Since every systole meets an edge exactly once, the number of systoles is $e_n/3$ where $e_n$ is the number of edges of $S_n$. Since $S_n$ is Hamiltonian, the number of edges in $S_n$ equals the number of edges in $X_n$.

Since $G$ acts freely transitively by type rotating automorphisms on $X$, and $G_3$ acts freely by type preserving automorphisms of $X$, the action of $G_3$ on $X_3$ is free with $3$ orbits of vertices, $21$ orbits of edges, and $21$ orbits of faces. Furthermore, the action $H_n$ on $X_n$ is free with quotient complex $X/G_3$.  In particular, the surface $S_n$ contains precisely  $e_n=21|H_n|$ edges and 
therefore $7|H_n|$ systoles. Furthermore, the Euler characteristic of $S_n$ is
 \[
2-2g_n= \chi(S_n)=(v_0-e_0+f_0)|H_n|=-4|H_n|.
 \]
 This shows that
 \[
 \Kiss(S_n)=7|H_n|=\frac 7 2(g_n-1)
 \]
for all $n\geq 1$.
\end{proof}

\end{document}